\def\exp {\textsf{exp}}
\def\a.s{\textsf{a. s}}
\def\and{\textsf{and}}
\newtheorem{theorem}{Theorem}
\newtheorem{lemma}{Lemma}
\newcommand{\beqnar}{\begin{eqnarray*}}
\newcommand{\eeqnar}{\end{eqnarray*}}
\newcommand{\ba}{\begin{array}}
\newcommand{\ea}{\end{array}}
\newcommand{\cu}{\boldsymbol}
\newcommand{\bx}{\preccurlyeq}
\newcommand{\lam}{\lambda_{\mathrm{max}}}
\newcommand{\qw}{\mathbb{E}}
\newcommand{\tre}{\mathrm{trexp}}
\newcommand{\dis}{\displaystyle}
\newenvironment{proof}[1]{\begin{trivlist}\item {\it
\bf Proof.}\quad} {\qed\end{trivlist}}
\newcommand{\qed}{\nopagebreak\hspace*{\fill}
{\vrule width6pt height6ptdepth0pt}\par}
\begin{document}

%Ê×ÐÐËõœø
%\setlength{\parindent}{2em}
%\begin{CJK*}{GBK}{song}
\title{ On Bernstein Type Exponential Inequalities for Matrix Martingales}
 \date{\today}

\author{Zijie Tian\footnote{School of Mathematics, Shandong University, Jinan, 250100, China.}}

\date{\today}
\maketitle

\begin{abstract}
  In this work, Bernstein's concentration inequalities for squared integrable matrix-valued discrete-time martingales are obtained. Based on Lieb's theory and Bernstein's condition, a suitable supermartingale can be constructed. Our proof is largely based on this new exponential supermartingale, Freedman's method, and Doob's stopping theorem. Our result can be regarded as an extension of Tropp's work (ECP, 2012). 

{\bf Keywords: } Bernstein's inequality, matrix martingales, Lieb's theorem. \end{abstract}

\section{Introduction}

There have been a lot of research achievements around concentration inequalities. The reader is referred to an excellent book: \textit{Concentration Inequalities for Sums and Martingales} [\cite{Ci}] which gives a detailed introduction of concentration inequalities. The concentration inequality is a class of inequality in probability, which describes the concentration phenomena of the values of the random variable. With a faster convergence rate, exponential type inequalities are of great importance when investigating the law of large numbers and the law of iterated logarithm.

As is known to all, a lot of exponential types inequalities are well known and frequently employed in statistics and probability. Especially, when considering the partial sum of independent random variables, there are multiple classical inequalities such as Petrov, Hoeffding [\cite{Ho}], Bennett [\cite{Bn1}, \cite{Bn2}] and Bernstein [\cite{Bs1}]. Bernstein's inequality is crucial because it gives an exponential upper bound on the tail probability of a large class of random variables.

Let us start by a traditional Bernstein's inequality. Suppose $(\Omega,\mathscr{F}%\{\mathscr{F}_n\}_{n\ge 0}
,P)$ is the probability space which is so large that we can construct all random objects of interest in it, and $X_1,\cdots,X_n$ be a finite sequence of centered independent variables with finite variances. Then let $S_n=X_1+X_2+\cdots +X_n$ and $\nu^2 =\dis \frac{1}{n} Var\bigg(\sum_{k=1}^{n}X_k\bigg)$, if there exists a constant $c>0$ such that for all $p>2$,

\begin{equation}\label{eq1}
	\mathbb{E}\big[|X_k|^p\big]\le \frac{p!c^{p-2}}{2} \mathbb{E} \big[X_k^2\big]\,,
\end{equation}
then

\begin{equation}\label{eq2}
	P\big(S_n \ge \sqrt{n}x\big)\le \exp\bigg(-\frac{\sqrt{n}x^2}{2(\sqrt{n}\nu^2+cx)}\bigg)\,,\,\forall \,x>0.	
\end{equation}

It is worth pointing out that (\ref{eq1}) is called Bernstein condition and the estimation of each moment of the random variable sequence is required. In fact, this condition can be reduced or even to the case of bounded random variables.

When considering random matrices, there are some corresponding results. The reader is referred to an article [\cite{TUF}], which gives an elaborate introduction of random matrices. There are some results such as Matrix Bennett, Bernstein, Hoeffding, Azuma, and McDiarmid [\cite{TUF}, \cite{TM}], and the Bernstein is also what we are concerned about.

Here and subsequently, we will introduce the semidefinite partial order $\bx$, which means
\[
\cu A \bx \cu B \quad \mathrm{if\,and \,only\, if}\quad \cu B - \cu A \,\mathrm{\, \,is \,positive\, semidefinite}. 
\]
The notation $\cu O \bx \cu A$ means that $\cu A$  is positive semidefinite. It is easy to check that $\lambda_{\mathrm{max}} \cu A \le \mathrm{tr} \cu A$ if $\cu A$ is positive semidefinite.

The expectation and the conditional expectation of a random matrix $\cu X$ are defined as follows. If $\cu X=(\xi_{ij})_{n\times n}$, and let $(\Omega,\mathscr{F},\{\mathscr{F}_n\}_{n\ge 0},P)$ be a probability space with the flow, then

\[
\mathbb{E}[\cu X]=\big(\mathbb{E} \xi_{ij}\big)_{n \times n}\quad,\quad 
\mathbb{E}\big[\cu X |\mathscr{F}_{n}\big]=\bigg(\mathbb{E}  \big[  \xi_{ij}|\mathscr{F}_n\big]\bigg)_{n \times n}\,.
\] 

Consider $\{\cu{X}_n \}_{n\ge 0}$ are independent, random, self-adjoint matrices with dimension $d$, which satifies
\begin{equation}
	E[\cu{X}_k]=\cu O \,,\, \lambda_{\mathrm{max}}(\cu X_k)\le C\,,\,a.s.
\end{equation}
Let $\sigma^2= \dis\Arrowvert \dis\sum_{k=1}^{n}\ \mathbb{E}(\cu X_k^2) \dis\Arrowvert$, where $\Arrowvert \cdot \Arrowvert$ is the spectral norm, which means the largest singular value of a matrix.
Then for all $t\ge 0$:
\begin{equation}
	P\bigg(\lam \big(\sum_{k=1}^{n} \cu{X}_k\big) \ge t\bigg) \le d\cdot \mathrm{exp}\bigg(-\frac{t^2}{2(\sigma^2 +Rt/3)}\bigg).
\end{equation}

Another perspective is the inequalities of martingales such as Azuma-Hoeffding [\cite{Ci},\cite{Ho}] , Freedman [\cite{Fre}] , Bernstein [\cite{Bs2}] and de la Pe\~na's [\cite{De}] inequalities. We emphasize that Bernstein's inequality of martingales gives an estimation of the tailed bound of the square-integrable martingale which satisfied the Bernstein condition.  Suppose $ M$ is an adapted square-integrable process, which satisfies $ {M_0}= 0$, then
\begin{equation*}
	\widetilde{  {M_n} } = \sum_{k=1}^{n} \mathbb E\bigg[\,{( {M_k}- {M_{k-1}})}^2\,|\,\mathscr F_{k-1}\,\bigg],
\end{equation*}

\begin{equation*}
	{V_n}= \widetilde{{M_n}}  - \widetilde{ {M_{n-1}} } = \mathbb E\bigg[\,{({M_n}-{M_{n-1}})}^2\,|\,\mathscr F_{n-1}\,\bigg].
\end{equation*}

Let $(M_n)_{n \ge 0}$ be a square-integrable martingale such that $M_0=0$. Assume that there exists a positive constant $c$ such that, for any integer $p \ge 3$ and all $1\le k\le n$,
\begin{equation}
	\qw \bigg[{(\Delta {M_n})^p}| \mathscr{F}_{k-1}\bigg] \le \frac{p!c^{p-2}}{2}  V_k,
\end{equation}
Then, for any positive $x$ and any positive $y$,
\begin{equation}
	\begin{aligned}
		P(M_n \ge nx,\widetilde{M}_n \le ny) &\le \,{\bigg(1+\frac{x^2}{2(y+cx)}\bigg)}^n \mathrm{exp}\bigg(-\frac{nx^2}{y+cx}\bigg)\\
		&\le  \,\mathrm{exp} \bigg(-\frac{nx^2}{2(y+cx)}\bigg).
	\end{aligned}
\end{equation} 

The main result of the passage is to generalize the traditional inequalities to matrix-valued processes. One of the most important matrix-valued processes is the matrix martingale. Let $(\Omega,\mathscr{F},\{\mathscr{F}_n\}_{n\ge 0},P)$ be a probability space with the flow. Suppose 

\[ \mathbf{X}_n = \left(
\begin{array}{cccc}
	\xi^{(n)}_{11} & \xi^{(n)}_{12} & \ldots & \xi^{(n)}_{1d}\\
	\xi^{(n)}_{21} & \xi^{(n)}_{22} & \ldots & \xi^{(n)}_{2d}\\
	\vdots & \vdots & \ddots & \vdots\\
	\xi^{(n)}_{d1} & \xi^{(n)}_{d2} & \ldots & \xi^{(n)}_{dd}\\
\end{array} \right) \]

is a $d$-dimensional random matrix, a matrix martingale is an adapted matrix-valued stochastic process $\{\cu X_n\}$ which satisfies $\forall 1\le i,j \le d$,
\begin{equation}
	E\big[\xi^{(n)}_{ij}|\mathscr{F}_{n-1}\big]=\xi^{(n-1)}_{ij}\,,\, E\big[\xi^{(n)}_{ij}] < \infty.
\end{equation}
That is, matrix martingale refers to a family of random matrices whose matrix elements are martingales.

It is of interest to know whether there are some corresponding inequalities in the adapted matrix-valued process or matrix martingale case. However, there are not many attempts has been made here to develop the traditional inequalities of matrix-valued process or matrix martingales. The Azuma inequality [\cite{TUF}] has its matrix-valued and matrix martingale version, and Oliveira  [\cite{Oli}]  has established an analog of Freedman’s inequality in the matrix setting who showed that the tail bound of maximum eigenvalue of the martingale is similar to Freedman’s inequality, and Tropp established a sharper version [\cite{Joel}]. We wish to investigate Bernstein's inequality in matrix-valued process or matrix martingales, and this paper aims to extend the results of Tropp [\cite{Joel}] to Bernstein's inequality of squared integrable matrix martingales. 

Our main tool is Lieb's theorem [\cite{TM}] which can help us complete one crucial step of the proof. To proof our results, we begin by establishing a supermartingale by Lieb's theorem [\cite{Lieb}] which has a suitable lower bound. We next define a stopping time and the basic idea is to apply Doob's stopping time theorem to estimate the eigenvalue of the maximum of squared integrable matrix martingales.

\section{Main Results}

To illustrate our notation, we will introduce the matrix exponential and the matrix logarithm. For Hermitian matrix $\cu A$, we can introduce the matrix exponential $e^{\cu A}$ by defining 
\begin{equation*}
	e^{\cu A} = \cu I + \sum_{n=1}^{\infty} \frac{{\cu A}^n}{n!}\,,
\end{equation*} 
and the matrix logarithm by defining the functional inverse of the matrix exponential: $\log (e^{\cu A})=\cu A$. There is no loss of generality in assuming that all the matrices in logarithmic functions are positive definite.

Suppose that $\cu{M}_n$ is a real symmetric random matrix with dimension $d$, and the process $\{ \cu{M}_n\}$ is squared integrable. We next denote $	\Delta \cu {M_n} = \cu {M_n}-\cu {M_{n-1}}\,,$
\begin{equation*}
	\widetilde{ \cu {M_n} } = \sum_{k=1}^{n} \mathbb E\bigg[\,{(\cu {M_k}-\cu {M_{k-1}})}^2\,|\,\mathscr F_{k-1}\,\bigg]\,,
\end{equation*}
\begin{equation*}
	\cu{V_n}= \widetilde{\cu {M_n}}  - \widetilde{ \cu {M_{n-1}} } = \mathbb E\bigg[\,{(\cu {M_n}-\cu {M_{n-1}})}^2\,|\,\mathscr F_{n-1}\,\bigg].
\end{equation*}

Throughout the passage, $\cu \varLambda_{\cu {X}}(t)$ stands for the binary function with matrix $\cu X$ and $t$:
\begin{equation}
	\cu \varLambda_{\cu X}(t)=\text{log}\bigg(\cu I + \frac{t^2 \cu X }{2(1-tc)}\bigg)\,,
\end{equation}
where $t>0$,$0<ct<1$ and $\cu I$ means the unit matrix.  Our main result reads as follows.

\begin{theorem}[Bernstein's Inequality for Matrix Martingales]\label{thm1}
	
	Let $\cu {M}$ be a squared integrable matrix martingale, $\cu {M_0}=\cu O$\,,\,and $\cu {M_n}$ is a real symmetric random matrix with dimension $d$ for all $n$, which satifies
	\\$\forall t>0 \,\, and \,\, 0<ct<1,$
	\begin{equation}
		\mathbb E \bigg[{(\Delta \cu {M_n})^p} \,|\,\mathscr F_{n-1}\bigg] \preccurlyeq \frac{p!c^{p-2}}{2} \cu V_n ,
	\end{equation}
	where $\cu{V_n}=\mathbb E\big[\,{(\cu {M_n}-\cu {M_{n-1}})}^2\,|\,\mathscr F_{n-1}\,\big]$,
	then forall $x,y,t>0\,,\,0<ct<1$,
	\begin{equation*}
		\mathbb P \bigg\{ \exists n :\, \lambda_{\mathrm{max}} (\cu {M_n}
		)\ge nx\,,\,\lambda_{\mathrm{max}}\bigg(\sum_{k=1}^{n} \cu \varLambda_{\cu {V_k}}(t)\bigg) \le n\mathrm{log}\bigg(1+\frac{yt^2}{2(1-tc)}\bigg) \bigg\} 
	\end{equation*}
	\begin{equation}
		\le d\,{\bigg(1+\frac{x^2}{2(y+cx)}\bigg)}^n \mathrm{exp}\bigg(-\frac{nx^2}{y+cx}\bigg)	\le d \,\mathrm{exp} \bigg(-\frac{nx^2}{2(y+cx)}\bigg).
	\end{equation}
	
\end{theorem}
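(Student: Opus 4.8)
The plan is to follow the classical Freedman/Tropp strategy adapted to the Bernstein moment condition, building on Lieb's theorem. The core object will be the matrix-valued process
\[
\cu Y_n = \exp\!\Big( t\,\cu M_n - \sum_{k=1}^{n} \cu \varLambda_{\cu V_k}(t) \Big),
\]
together with its trace $W_n = \operatorname{tr} \cu Y_n$. First I would verify, using the Bernstein condition together with the expansion $\exp(t\,\Delta\cu M_n) = \cu I + t\,\Delta\cu M_n + \sum_{p\ge 2} \frac{t^p}{p!}(\Delta\cu M_n)^p$ and the fact that $\cu M$ is a martingale (so $\mathbb E[\Delta\cu M_n\mid\mathscr F_{n-1}] = \cu O$), that
\[
\mathbb E\big[\exp(t\,\Delta \cu M_n)\,\big|\,\mathscr F_{n-1}\big] \preccurlyeq \cu I + \frac{t^2 \cu V_n}{2(1-tc)} = \exp\!\big(\cu \varLambda_{\cu V_k}(t)\big);
\]
the geometric-series bound $\sum_{p\ge 2}\frac{t^p c^{p-2}}{2} = \frac{t^2}{2(1-tc)}$ is exactly where the hypothesis $0<ct<1$ is used, and the inequality $\cu I + \cu A \preccurlyeq \exp(\cu A)$ (valid for Hermitian $\cu A$) gives the last step.

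Next I would show $\{W_n\}$ is a supermartingale. This is the step where Lieb's theorem enters: since $\cu A\mapsto \operatorname{tr}\exp(\cu H + \log \cu A)$ is concave on positive-definite $\cu A$, one takes conditional expectations and applies Jensen's inequality to get
\[
\mathbb E\big[W_n\mid\mathscr F_{n-1}\big] = \mathbb E\Big[\operatorname{tr}\exp\Big(\textstyle\sum_{k<n}\!-\cu\varLambda_{\cu V_k}(t) + t\cu M_{n-1} + \log e^{t\Delta\cu M_n}\Big)\,\Big|\,\mathscr F_{n-1}\Big] \le \operatorname{tr}\exp\Big(\textstyle\sum_{k\le n}\!-\cu\varLambda_{\cu V_k}(t) + t\cu M_{n-1} + \log \mathbb E[e^{t\Delta\cu M_n}\mid\mathscr F_{n-1}]\Big),
\]
and then the monotonicity of $\operatorname{tr}\exp$ together with the one-step bound from the previous paragraph collapses the $\log\mathbb E[\cdots]$ term against $-\cu\varLambda_{\cu V_n}(t)$, yielding $\mathbb E[W_n\mid\mathscr F_{n-1}]\le W_{n-1}$. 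Since $W_0 = \operatorname{tr}\cu I = d$, we have $\mathbb E W_n \le d$.

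Then I would introduce the stopping time $\tau = \inf\{n : \lambda_{\max}(\cu M_n) \ge nx,\ \lambda_{\max}(\sum_{k\le n}\cu\varLambda_{\cu V_k}(t)) \le n\log(1+\tfrac{yt^2}{2(1-tc)})\}$ and apply Doob's optional stopping theorem to the supermartingale, so $\mathbb E[W_{\tau\wedge n}]\le d$. On the event in the theorem statement (with witness index $n$, and restricting to $\tau\le n$), the exponent of $\cu Y_\tau$ has $\lambda_{\max}(t\cu M_\tau)\ge t\tau x$ while $\lambda_{\min}(-\sum\cu\varLambda_{\cu V_k}(t))\ge -\tau\log(1+\tfrac{yt^2}{2(1-tc)})$, hence $W_\tau \ge \lambda_{\max}(\cu Y_\tau)\ge \exp\big(\tau t x - \tau\log(1+\tfrac{yt^2}{2(1-tc)})\big)$. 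Combining with Markov's inequality gives a bound $d\cdot(1+\tfrac{yt^2}{2(1-tc)})^n \exp(-ntx)$ on the probability (using that $n\mapsto$ the relevant exponent is decreasing so the worst case is at the witness index $n$, after the standard argument handling the event $\{\exists n\}$ as a monotone union). Finally I would optimize over $t\in(0,1/c)$: choosing $t = \tfrac{x}{y+cx}$ (which satisfies $0<ct<1$) and using $\log(1+u)\le u$ delivers exactly $d\,(1+\tfrac{x^2}{2(y+cx)})^n\exp(-\tfrac{nx^2}{y+cx})$, and then $1+u\le e^u$ gives the cleaner second bound.

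The main obstacle I anticipate is the careful handling of the ``$\exists n$'' event and the stopping-time argument in the matrix setting: one must argue that the maximal-inequality-style event of the theorem is captured by a single stopping time, verify that optional stopping applies to $\{W_{n}\}$ (a nonnegative supermartingale, so uniform integrability of the stopped process is not an issue, but one should still justify taking $n\to\infty$), and correctly track that on $\{\tau<\infty\}$ the two spectral conditions combine additively inside the exponential even though $\cu M_\tau$ and $\sum\cu\varLambda_{\cu V_k}(t)$ need not commute — here the inequality $\lambda_{\max}(\cu A + \cu B)\ge \lambda_{\max}(\cu A) + \lambda_{\min}(\cu B)$ is what rescues the argument. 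The algebraic verification of the one-step bound (the geometric series and $\cu I+\cu A\preccurlyeq e^{\cu A}$) is routine, and the Lieb/Jensen step is by now standard, so the bookkeeping around the stopping time is where the real care is needed.
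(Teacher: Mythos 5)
Your proposal is correct and follows essentially the same route as the paper: the trace-exponential process $\mathrm{tr}\exp\big[t\cu M_n-\sum_{k\le n}\cu\varLambda_{\cu V_k}(t)\big]$ is shown to be a supermartingale via Lieb's concavity theorem and the geometric-series bound from the Bernstein condition, then a stopping time plus optional stopping and the eigenvalue lower bound on the stopped process give the tail estimate, optimized at $t=x/(y+cx)$. Your observation that $\cu I+\tfrac{t^2\cu V_n}{2(1-tc)}=\exp(\cu\varLambda_{\cu V_n}(t))$ by definition is a slightly cleaner way to close the one-step bound than the paper's appeal to operator monotonicity of the logarithm, but the argument is the same.
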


\section{Proof of Main Results}

Here are some lemmas and tools for our demonstration.

\begin{lemma}{[\cite{DP}]}\label{lm3}
	For any Hermitian matrices $\cu A$ and $\cu B$ with the same dimension, if $A\bx B$,then\begin{equation}
		\mathrm{tr}e^{\cu A} \le \mathrm{tr}e^{\cu B}.
	\end{equation}
\end{lemma}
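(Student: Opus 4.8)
The plan is to reduce the trace--exponential inequality to a comparison of eigenvalues. Since $\cu A$ and $\cu B$ are Hermitian, the spectral mapping theorem identifies the eigenvalues of $e^{\cu A}$ as $e^{\lambda_i(\cu A)}$, where $\lambda_1(\cu A)\ge\cdots\ge\lambda_d(\cu A)$ denote the eigenvalues of $\cu A$ arranged in decreasing order, and similarly for $\cu B$. Consequently,
\begin{equation*}
	\mathrm{tr}\,e^{\cu A}=\sum_{i=1}^{d}e^{\lambda_i(\cu A)},\qquad \mathrm{tr}\,e^{\cu B}=\sum_{i=1}^{d}e^{\lambda_i(\cu B)},
\end{equation*}
so it is enough to show that $\lambda_i(\cu A)\le\lambda_i(\cu B)$ for every $1\le i\le d$ and then apply monotonicity of the scalar exponential termwise.

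First I would prove the eigenvalue comparison, which is the only step carrying genuine content. By the Courant--Fischer min--max characterization,
\begin{equation*}
	\lambda_i(\cu A)=\max_{\dim U=i}\ \min_{x\in U,\ \|x\|=1}\langle \cu A x,x\rangle,
\end{equation*}
the maximum running over all $i$-dimensional subspaces $U$. The hypothesis $\cu A\bx\cu B$ says precisely that $\cu B-\cu A$ is positive semidefinite, hence $\langle \cu A x,x\rangle\le\langle \cu B x,x\rangle$ for every vector $x$. Feeding this pointwise inequality into the inner minimum and then the outer maximum yields $\lambda_i(\cu A)\le\lambda_i(\cu B)$ for each $i$; this is Weyl's monotonicity theorem. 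Since $t\mapsto e^{t}$ is increasing, we get $e^{\lambda_i(\cu A)}\le e^{\lambda_i(\cu B)}$ for all $i$, and summing over $i$ together with the two trace identities delivers $\mathrm{tr}\,e^{\cu A}\le\mathrm{tr}\,e^{\cu B}$.

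An equivalent route, avoiding the min--max formula, is to set $\cu C=\cu B-\cu A\bx\cu O$ and study $f(t)=\mathrm{tr}\,e^{\cu A+t\cu C}$ on $[0,1]$. Using Duhamel's formula for the derivative of a matrix exponential together with the cyclicity of the trace, one finds $f'(t)=\mathrm{tr}\big(\cu C\,e^{\cu A+t\cu C}\big)$. Writing $\cu C=\cu C^{1/2}\cu C^{1/2}$ and cycling the trace gives $f'(t)=\mathrm{tr}\big(\cu C^{1/2}e^{\cu A+t\cu C}\cu C^{1/2}\big)\ge 0$, because $e^{\cu A+t\cu C}$ is positive definite and congruence preserves positive semidefiniteness. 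Thus $f$ is nondecreasing, and $f(0)=\mathrm{tr}\,e^{\cu A}\le\mathrm{tr}\,e^{\cu B}=f(1)$.

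I do not anticipate a serious obstacle: the statement is a clean monotonicity fact and both arguments are short. The one point that must be handled carefully is the passage from the Loewner order on the full matrices to inequalities between individual eigenvalues (respectively, to the sign of $\mathrm{tr}(\cu C\,e^{\cu A+t\cu C})$); the min--max formula, or the square-root and cyclicity trick in the calculus version, is exactly what makes this step rigorous rather than merely plausible.
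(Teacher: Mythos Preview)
Your proposal is correct. Both routes you give are standard, complete, and rigorous: the first uses Weyl's monotonicity (via Courant--Fischer) plus the spectral mapping theorem, and the second is the one-parameter calculus argument with $f'(t)=\mathrm{tr}(\cu C^{1/2}e^{\cu A+t\cu C}\cu C^{1/2})\ge 0$. Either suffices.

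As for comparison with the paper: there is nothing to compare against. The paper does not prove this lemma at all; it simply states it with a citation to Petz's survey [\cite{DP}] and moves on. So your write-up is strictly more than what the paper supplies. If anything, your first argument is the ``short proof'' one would expect to find in such a survey.
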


This theorem is an important result of Elliott Lieb on the convexity properties of
the trace exponential function. See [\cite{DP}] for a short proof of the fact.
\begin{lemma}{[\cite{Lieb}]}\label{thm4}
	For any fixed Hermitian matrix $\cu B$ with dimensional $d$, The function $f$:\begin{equation}
		f(\cu A)=\mathrm{trexp}\,(\cu B+\mathrm{log} \cu A)
	\end{equation}
	is a concave function on the convex cone of $d\times d$ positive-definite matrices.
\end{lemma}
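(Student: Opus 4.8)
The plan is to reduce the asserted concavity in $\cu A$ to a \emph{joint} concavity statement by means of a variational (Gibbs) representation of the trace exponential, and then to invoke the elementary fact that partial maximisation of a jointly concave function is concave. The starting point is the identity, valid for every Hermitian $\cu H$,
\[
	\mathrm{tr}\,e^{\cu H}=\max_{\cu X\succ\cu O}\Big[\mathrm{tr}(\cu X\cu H)-\mathrm{tr}(\cu X\log\cu X)+\mathrm{tr}\,\cu X\Big].
\]
I would prove this directly: the bracketed functional is concave in $\cu X$ because $\cu X\mapsto-\mathrm{tr}(\cu X\log\cu X)$ (the von Neumann entropy) is concave; its stationarity condition is $\log\cu X=\cu H$; and evaluating at the unique maximiser $\cu X=e^{\cu H}$ returns exactly $\mathrm{tr}\,e^{\cu H}$.

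Setting $\cu H=\cu B+\log\cu A$ (legitimate since we may take $\cu A\succ\cu O$, so $\log\cu A$ is defined) turns the identity into $f(\cu A)=\max_{\cu X\succ\cu O}\Phi(\cu X,\cu A)$, where
\[
	\Phi(\cu X,\cu A)=\mathrm{tr}(\cu X\cu B)+\mathrm{tr}\,\cu X-D(\cu X\,\Vert\,\cu A),\qquad D(\cu X\,\Vert\,\cu A)=\mathrm{tr}\big[\cu X(\log\cu X-\log\cu A)\big].
\]
The key structural observation is that once $\Phi$ is jointly concave in the pair $(\cu X,\cu A)$, concavity of $f$ follows at once: given $\cu A_1,\cu A_2\succ\cu O$, $\lambda\in[0,1]$, and maximisers $\cu X_1,\cu X_2$, I would test the feasible (not necessarily optimal) point $\lambda\cu X_1+(1-\lambda)\cu X_2$ in the variational problem for $\lambda\cu A_1+(1-\lambda)\cu A_2$ to obtain
\[
	f\big(\lambda\cu A_1+(1-\lambda)\cu A_2\big)\ge\Phi\big(\lambda\cu X_1+(1-\lambda)\cu X_2,\ \lambda\cu A_1+(1-\lambda)\cu A_2\big)\ge\lambda f(\cu A_1)+(1-\lambda)f(\cu A_2).
\]
Since $\mathrm{tr}(\cu X\cu B)$ and $\mathrm{tr}\,\cu X$ are affine, joint concavity of $\Phi$ is equivalent to the joint \emph{convexity} of the relative entropy $(\cu X,\cu A)\mapsto D(\cu X\,\Vert\,\cu A)$.

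The hard part is precisely this joint convexity of $D$, which is the genuine analytic content of the lemma; everything above is bookkeeping. I would derive it from Lieb's concavity theorem for powers, namely that $(\cu X,\cu A)\mapsto\mathrm{tr}(\cu X^{1-s}\cu A^{s})$ is jointly concave for $s\in[0,1]$, by differentiating at $s=0$. A short computation gives $\tfrac{d}{ds}\big|_{s=0}\mathrm{tr}(\cu X^{1-s}\cu A^{s})=-D(\cu X\,\Vert\,\cu A)$, so that
\[
	D(\cu X\,\Vert\,\cu A)=\lim_{s\to0^{+}}\frac{\mathrm{tr}\,\cu X-\mathrm{tr}(\cu X^{1-s}\cu A^{s})}{s}.
\]
For each $s>0$ the quotient is $s^{-1}$ times an affine function minus a jointly concave one, hence jointly convex; a pointwise limit of jointly convex functions is jointly convex, giving the claim.

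It remains to secure the power statement itself, and this is the step I expect to be the true obstacle, since it lies at the same depth as the lemma. I would obtain it from Ando's theorem on the joint concavity of the weighted operator geometric mean $(\cu A,\cu B)\mapsto\cu A^{1/2}\big(\cu A^{-1/2}\cu B\,\cu A^{-1/2}\big)^{s}\cu A^{1/2}$ for $s\in[0,1]$, which rests on the operator concavity of $t\mapsto t^{s}$ and can be packaged through the operator Jensen inequality or a $2\times2$ block-matrix positivity argument. Finally, since the whole chain is proved for $\cu A\succ\cu O$, the concavity inequality extends to the closed cone of positive semidefinite matrices by continuity.
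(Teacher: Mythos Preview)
Your argument is correct and follows a well-known route, essentially the one in the paper's own reference [\cite{JAF}]: represent $\mathrm{tr}\,e^{\cu H}$ by the Gibbs variational formula, reduce concavity of $f$ to the joint convexity of the quantum relative entropy $D(\cu X\,\Vert\,\cu A)$, and obtain the latter by differentiating Lieb's concavity $(\cu X,\cu A)\mapsto\mathrm{tr}(\cu X^{1-s}\cu A^{s})$ at $s=0$. The steps are sound; the only point worth flagging is that you have pushed the difficulty into the power concavity statement and then into Ando's theorem, so your proof is really a reduction chain rather than a self-contained argument---but you say as much yourself.

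The paper, however, does \emph{not} prove this lemma at all: it merely states the result, attributes it to Lieb [\cite{Lieb}], and directs the reader to [\cite{TUF}] and [\cite{JAF}] for discussion. So there is no proof in the paper to compare your proposal against; you have supplied exactly the derivation that the paper chose to cite rather than reproduce.
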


This lemma is from [\cite{Lieb}]. And the readers can see [\cite{TUF}] and [\cite{JAF}] for additional discussion for this lemma. From the theorem mentioned above and Jensen's Inequality, the following lemma is obtained.
\begin{lemma}[Lieb]{[\cite{UF}]}\label{thm5}
	For any fixed Hermitian matrix $\cu B$ with dimensional $d$, if $\cu X$ is a random Hermitian matrix of the same dimension, we have
	\begin{equation}
		\mathbb{E} \bigg[\tre(\cu B+\cu X) \bigg] \le \tre \bigg(\cu B+\mathrm{log} \mathbb{E} e^{\cu X}\bigg).
	\end{equation}
\end{lemma}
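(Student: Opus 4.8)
The plan is to obtain this statement as a direct consequence of the concavity established in Lemma~\ref{thm4}, invoking Jensen's inequality in the finite-dimensional real vector space of Hermitian matrices. The crucial observation is that for a Hermitian matrix $\cu X$ the exponential $e^{\cu X}$ is positive definite, so I would set $\cu A = e^{\cu X}$; this $\cu A$ is then a random element of the open convex cone of $d\times d$ positive-definite matrices, which is precisely the domain on which the function $f(\cu A)=\tre(\cu B+\log\cu A)$ from Lemma~\ref{thm4} is defined and concave. Because $\log e^{\cu X}=\cu X$ by the functional-inverse definition of the matrix logarithm, one has the pointwise identity $\tre(\cu B+\cu X)=\tre(\cu B+\log\cu A)=f(\cu A)$, so the left-hand side of the claim is exactly $\mathbb{E}[f(\cu A)]$.

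First I would check that $\mathbb{E}[\cu A]=\mathbb{E} e^{\cu X}$ is itself positive definite, and hence a legitimate argument of $f$: for any nonzero vector $v$ one has $v^{*}\,\mathbb{E}[e^{\cu X}]\,v=\mathbb{E}[v^{*}e^{\cu X}v]>0$, since $v^{*}e^{\cu X}v>0$ almost surely. This ensures $\log\mathbb{E} e^{\cu X}$ is well defined and that the right-hand side $\tre(\cu B+\log\mathbb{E} e^{\cu X})=f(\mathbb{E}[\cu A])$ is meaningful. I would then apply Jensen's inequality to the concave $f$ via its supporting-hyperplane characterization: at the interior point $\cu A_0=\mathbb{E}[\cu A]$ there is a real-linear functional $L$ on Hermitian matrices with $f(\cu A)\le f(\cu A_0)+L(\cu A-\cu A_0)$ for all admissible $\cu A$; taking expectations and using $\mathbb{E}[\cu A-\cu A_0]=\cu O$ yields $\mathbb{E}[f(\cu A)]\le f(\mathbb{E}[\cu A])$. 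Chaining the two identities with this inequality gives
\begin{equation*}
\mathbb{E}\big[\tre(\cu B+\cu X)\big]=\mathbb{E}[f(\cu A)]\le f(\mathbb{E}[\cu A])=\tre\big(\cu B+\log\mathbb{E} e^{\cu X}\big).
\end{equation*}

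The hard part will not be any computation but the rigorous justification of the matrix form of Jensen's inequality. I expect to rely on the implicit integrability hypothesis that $\mathbb{E} e^{\cu X}$ exists entrywise, so that $\cu A$ has a well-defined mean lying in the domain of $f$; the concavity furnished by Lemma~\ref{thm4} then supplies the supporting functional and reduces everything to the standard one-line argument above. A secondary point worth verifying is that $\mathbb{E}[\cu A]$ lies in the \emph{interior} of the positive-definite cone, which is exactly the strict positivity computed in the previous paragraph and is what legitimizes the supporting-hyperplane step without appealing to any boundary behaviour of $f$.
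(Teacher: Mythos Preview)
Your proposal is correct and follows precisely the route the paper indicates: apply Jensen's inequality to the concave function $f(\cu A)=\tre(\cu B+\log\cu A)$ of Lemma~\ref{thm4} with $\cu A=e^{\cu X}$. The paper does not spell out the argument but simply states that the lemma ``is obtained'' from Lemma~\ref{thm4} and Jensen's inequality and refers to [\cite{UF}]; your write-up supplies exactly those details, including the check that $\mathbb{E}e^{\cu X}$ is positive definite.
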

For the proofs, we refer the reader to [\cite{UF}].

\begin{lemma}[Logarithm is Operator Monotone]{[\cite{TM}]}
	Suppose $\cu A$ and $\cu B$ are positive-definite matrices. If $\cu A\preccurlyeq \cu B $, then $\mathrm{log\mathbf{A}} \preccurlyeq \mathrm{log\mathbf{B}}$.
\end{lemma}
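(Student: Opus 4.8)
The plan is to exploit the classical integral representation of the scalar logarithm and lift it to matrices through the spectral (functional) calculus. For any $s>0$ one has
\begin{equation*}
	\log s = \int_0^\infty \left( \frac{1}{1+t} - \frac{1}{s+t} \right) dt,
\end{equation*}
which is verified by evaluating the antiderivative $\log\frac{1+t}{s+t}$ at the two endpoints. Applying this formula to each eigenvalue of a positive-definite matrix $\cu A$ (i.e. in a spectral decomposition) produces the matrix identity
\begin{equation*}
	\mathrm{log}\,\cu A = \int_0^\infty \left( \frac{1}{1+t}\,\cu I - (\cu A + t\cu I)^{-1} \right) dt.
\end{equation*}
Since the term $\frac{1}{1+t}\cu I$ does not involve $\cu A$, the entire problem reduces to understanding how the resolvent $(\cu A + t\cu I)^{-1}$ behaves as $\cu A$ increases in the semidefinite order.

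The key step, and the only nontrivial one, is to show that the matrix inverse is operator \emph{anti}monotone on positive-definite matrices: if $\cu O \bx \cu A \bx \cu B$ then $\cu B^{-1} \bx \cu A^{-1}$. I would establish this by a congruence (sandwiching) argument. Conjugation by any fixed matrix preserves the order, since $\cu Y - \cu X$ positive semidefinite forces $\cu S^{\ast}(\cu Y - \cu X)\cu S$ to be as well. Starting from $\cu A \bx \cu B$ and conjugating by $\cu A^{-1/2}$ gives $\cu I \bx \cu A^{-1/2}\cu B\,\cu A^{-1/2} =: \cu C$. For a positive-definite matrix, $\cu I \bx \cu C$ is equivalent (by comparing eigenvalues) to $\cu C^{-1} \bx \cu I$; writing $\cu C^{-1} = \cu A^{1/2}\cu B^{-1}\cu A^{1/2}$ and conjugating once more by $\cu A^{-1/2}$ yields $\cu B^{-1} \bx \cu A^{-1}$, as claimed.

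With antimonotonicity of the inverse in hand, the remainder is routine. For every $t>0$ the matrices $\cu A + t\cu I$ and $\cu B + t\cu I$ are positive definite and satisfy $\cu A + t\cu I \bx \cu B + t\cu I$, so $(\cu B + t\cu I)^{-1} \bx (\cu A + t\cu I)^{-1}$. Subtracting each resolvent from the common term $\frac{1}{1+t}\cu I$ reverses the inequality into
\begin{equation*}
	\frac{1}{1+t}\,\cu I - (\cu A + t\cu I)^{-1} \;\bx\; \frac{1}{1+t}\,\cu I - (\cu B + t\cu I)^{-1},
\end{equation*}
so the integrand for $\cu A$ is dominated by that for $\cu B$ at every fixed $t$. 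I would then integrate this pointwise inequality: testing against an arbitrary vector $v$, the scalar inequality $v^{\ast}(\,\cdot\,)v$ holds under the integral sign for all $t$ and hence survives integration over $(0,\infty)$, giving $v^{\ast}(\mathrm{log}\,\cu A)v \le v^{\ast}(\mathrm{log}\,\cu B)v$ for every $v$, i.e. $\mathrm{log}\,\cu A \bx \mathrm{log}\,\cu B$. The main obstacle is thus concentrated entirely in the antimonotonicity of the inverse; once that congruence computation is set up, operator monotonicity of the logarithm follows from linearity and the order-preserving nature of integration.
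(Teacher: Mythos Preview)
Your argument is correct and is in fact the standard route: the integral representation $\log s = \int_0^\infty\big((1+t)^{-1}-(s+t)^{-1}\big)\,dt$, lifted to matrices via the functional calculus, together with the antimonotonicity of the inverse on positive-definite matrices (proved by the congruence trick with $\cu A^{-1/2}$), yields the result after integrating the pointwise semidefinite inequality.

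The paper itself does not supply a proof of this lemma; it simply refers the reader to Tropp's monograph [\cite{TM}] for a short proof. The argument given there is essentially the one you have written, so your proposal coincides with the approach the paper defers to. There is nothing to add or correct.
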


See [\cite{TM}] for a short proof of this lemma.

Now we will give the proof of our main theorem. To prove our inequality, we need to construct a suitable supermartingale, and it is crucial to estimate its lower bound. In this section, we follow the notation mentioned above. 

Define a new stochastic process $\{S_n\}_{n \ge 0}$ :
\begin{equation}
	S_n=G_t\,(\cu{M_n}\,,\,\cu{V_n}) = \mathrm{tr\,exp}\bigg[t\cu {M_n}-\sum_{k=1}^{n} \cu \varLambda_{\cu {V_k}}(t)\bigg]\,.
\end{equation}
We will consider the behavior of the $S_n$ defined above. The task is now to find whether $\{S_n\}_{n \ge 0}$ is a supermartingale. In fact, we have the following theorem.
\begin{theorem}
	The stochastic process $\{S_n\}_{n \ge 0}$\,,\,$S_0=d$ is a supermartingale with the Bernstein's condition 
	\begin{equation}
		\mathbb E \bigg[{(\Delta \cu {M_n})^p} \,|\,\mathscr F_{n-1}\bigg] \preccurlyeq \frac{p!c^{p-2}}{2} \cu V_n ,
	\end{equation}
	where the $d$ is the dimension of $\cu M_n$.
\end{theorem}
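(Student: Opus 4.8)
The plan is to verify the one-step inequality $\mathbb{E}[S_n\mid\mathscr{F}_{n-1}]\le S_{n-1}$ directly, combining the three lemmas above (monotonicity of $\mathrm{tr}\exp$ in Lemma~\ref{lm3}, the Lieb-type inequality of Lemma~\ref{thm5}, and operator monotonicity of $\log$) with a Taylor expansion of the matrix exponential controlled by the Bernstein condition. First I would peel off the last increment: writing $\Delta\cu{M_n}=\cu{M_n}-\cu{M_{n-1}}$, one has
\begin{equation*}
	t\cu{M_n}-\sum_{k=1}^{n}\cu\varLambda_{\cu{V_k}}(t) \;=\; \cu B + t\,\Delta\cu{M_n}, \qquad \cu B := t\cu{M_{n-1}}-\sum_{k=1}^{n-1}\cu\varLambda_{\cu{V_k}}(t)-\cu\varLambda_{\cu{V_n}}(t),
\end{equation*}
where $\cu B$ is $\mathscr{F}_{n-1}$-measurable (recall that $\cu{V_n}=\mathbb{E}[(\Delta\cu{M_n})^2\mid\mathscr{F}_{n-1}]$ is itself $\mathscr{F}_{n-1}$-measurable), while $t\,\Delta\cu{M_n}$ is the genuinely random increment. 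Conditioning on $\mathscr{F}_{n-1}$ and applying the conditional form of Lemma~\ref{thm5} with the fixed matrix $\cu B$ gives
\begin{equation*}
	\mathbb{E}\big[S_n\mid\mathscr{F}_{n-1}\big] = \mathbb{E}\big[\mathrm{tr}\exp(\cu B+t\,\Delta\cu{M_n})\mid\mathscr{F}_{n-1}\big] \le \mathrm{tr}\exp\Big(\cu B+\log\mathbb{E}\big[e^{t\Delta\cu{M_n}}\mid\mathscr{F}_{n-1}\big]\Big).
\end{equation*}

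Next I would bound the conditional matrix moment generating function. Expanding $e^{t\Delta\cu{M_n}}=\cu I+t\,\Delta\cu{M_n}+\sum_{p\ge2}\frac{t^p}{p!}(\Delta\cu{M_n})^p$ and taking conditional expectations, the martingale property kills the linear term ($\mathbb{E}[\Delta\cu{M_n}\mid\mathscr{F}_{n-1}]=\cu O$), and applying the Bernstein condition $\mathbb{E}[(\Delta\cu{M_n})^p\mid\mathscr{F}_{n-1}]\preccurlyeq\frac{p!c^{p-2}}{2}\cu{V_n}$ termwise (it is automatic for $p=2$) together with the geometric sum $\sum_{p\ge2}(tc)^{p-2}=\frac{1}{1-tc}$, legitimate since $0<tc<1$, yields
\begin{equation*}
	\mathbb{E}\big[e^{t\Delta\cu{M_n}}\mid\mathscr{F}_{n-1}\big] \;\preccurlyeq\; \cu I+\frac{t^2\cu{V_n}}{2(1-tc)}.
\end{equation*}
Operator monotonicity of the logarithm then gives $\log\mathbb{E}[e^{t\Delta\cu{M_n}}\mid\mathscr{F}_{n-1}]\preccurlyeq\cu\varLambda_{\cu{V_n}}(t)$, hence $\cu B+\log\mathbb{E}[e^{t\Delta\cu{M_n}}\mid\mathscr{F}_{n-1}]\preccurlyeq t\cu{M_{n-1}}-\sum_{k=1}^{n-1}\cu\varLambda_{\cu{V_k}}(t)$. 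Feeding this into Lemma~\ref{lm3} collapses the right-hand side of the previous display to $\mathrm{tr}\exp\big(t\cu{M_{n-1}}-\sum_{k=1}^{n-1}\cu\varLambda_{\cu{V_k}}(t)\big)=S_{n-1}$, which is precisely the supermartingale property. Finally $S_0=\mathrm{tr}\exp(t\cu{M_0})=\mathrm{tr}\,\cu I=d$ because $\cu{M_0}=\cu O$; since each $S_n\ge 0$ the conditional expectations above are well defined in $[0,\infty]$, and the inequality just proved, read with $\mathbb{E}[S_0]=d<\infty$, forces $\mathbb{E}[S_n]\le d$ for every $n$, so the $S_n$ are genuinely integrable and $\{S_n\}$ is a bona fide supermartingale.

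I expect the principal obstacle to be the careful passage to the conditional version of Lemma~\ref{thm5}: it is stated for ordinary expectation, so one must justify it conditionally on $\mathscr{F}_{n-1}$, via regular conditional distributions and treating the $\mathscr{F}_{n-1}$-measurable matrix $\cu B$ as a constant, which is the standard but slightly delicate device underlying Tropp's method. A second point needing care is the termwise handling of the exponential series: one must interchange conditional expectation with the infinite sum and use that $\cu A_p\preccurlyeq\cu B_p$ for all $p$ implies $\sum_p\cu A_p\preccurlyeq\sum_p\cu B_p$ (valid because the positive-semidefinite cone is closed and convex). A minor subtlety is that the odd powers $(\Delta\cu{M_n})^p$ need not be positive semidefinite; this is harmless, since the Bernstein condition enters only as a one-sided upper bound in $\preccurlyeq$ and every monotonicity step uses only upper bounds.
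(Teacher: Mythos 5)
Your proposal is correct and follows the same overall strategy as the paper (conditional Lieb inequality, Taylor expansion of the matrix mgf controlled by the Bernstein condition, operator monotonicity of $\log$, monotonicity of $\mathrm{tr}\exp$). The one substantive difference is where you split off the increment, and your choice is actually the sound one: you write $t\cu{M_n}-\sum_{k=1}^{n}\cu\varLambda_{\cu{V_k}}(t)=\cu B+t\,\Delta\cu{M_n}$ with $\cu B$ being $\mathscr{F}_{n-1}$-measurable and apply the Lieb-type lemma to the increment alone, which is exactly how Tropp's argument is meant to run. The paper instead applies the lemma to all of $t\cu{M_n}$ and then asserts
$\log\mathbb{E}[\exp\{t\cu{M_n}\}\mid\mathscr{F}_{n-1}]=\log\big[t\cu{M_{n-1}}\cdot\mathbb{E}[\exp\{t\Delta\cu{M_n}\}\mid\mathscr{F}_{n-1}]\big]$, a step that is not valid as written (beyond the obvious typo, $e^{\cu A+\cu B}\ne e^{\cu A}e^{\cu B}$ for non-commuting matrices, so the conditional expectation does not factor this way); your decomposition avoids the problem entirely. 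Your closing remarks on the conditional form of Lemma~\ref{thm5}, the termwise comparison in the closed cone, and the induction giving $\mathbb{E}[S_n]\le d$ address exactly the points the paper leaves implicit.
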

\begin{proof}
	TTo prove that the process $\{S_n\}_{n \ge 0}$ is a supermartingale, we need to prove that $\mathbb{E}[S_n\,|\,\mathscr F_{n-1}] \le S_{n-1}$. Because $\cu{V_n}=\mathbb E\big[\,{(\cu {M_n}-\cu {M_{n-1}})}^2\,|\,\mathscr F_{n-1}\,\big]$ is measurable to $\mathscr F_{n-1}$, we can still use Lemma 3 [\cite{Joel}], which means
	\begin{equation}
		\begin{aligned}
			\mathbb{E}[S_n\,|\,\mathscr F_{n-1}] &= \mathbb{E}\bigg[\,\mathrm{tr\,exp}\,\big[t\cu {M_n}-\sum_{k=1}^{n} \cu \varLambda_{\cu {V_k}}(t)\big]\,\bigg|\,\mathscr F_{n-1}\bigg]\\
			&\le \mathrm{tr\,exp} \bigg[ \mathrm{log\mathbb{E}}\big[ \mathrm{exp}\{t\cu {M_n}\}\,\big|\,\mathscr F_{n-1}\big]-\sum_{k=1}^{n} \cu \varLambda_{\cu {V_k}}(t) \bigg].
		\end{aligned}
	\end{equation}
	From the properties of conditional expectation, we have
	\[
	\mathrm{log\mathbb{E}}\big[ \mathrm{exp}\{t\cu {M_n}\}\,\big|\,\mathscr F_{n-1}\big]=\mathrm{log} \bigg[t\cu {M_{n-1}}\cdot \mathbb{E} \big[ \mathrm{exp}\{t\cu { \Delta M_n}\}\,\big|\,\mathscr F_{n-1}\big] \bigg].
	\]
	From Lemma1 and a brief calculation, we only need to show that
	\begin{equation}\label{log}
		\mathrm{log} \mathbb{E} \big[ \mathrm{exp}\{t\cu { \Delta M_n}\}\,\big|\,\mathscr F_{n-1}\big] \preccurlyeq \cu \varLambda_{\cu V_n}(t)=\text{log}\bigg(\cu I + \frac{t^2 \cu V_n }{2(1-tc)}\bigg),
	\end{equation}
	where $\cu{V_n}=\mathbb E\big[\,{(\cu {M_n}-\cu {M_{n-1}})}^2\,|\,\mathscr F_{n-1}\,\big]$.
	By definition of the matrix exponential, we have 
	\begin{equation}
		\mathbb{E} \big[ \mathrm{exp}\{t\cu { \Delta M_n}\}\,\big|\,\mathscr F_{n-1}\big]= \cu I +\sum_{p=2}^{\infty}\frac{t^{p}}{p!} \mathbb E \bigg[{(\Delta \cu {M_n})^p} \big|\,\mathscr F_{n-1} \bigg]\,,
	\end{equation}
	here $\mathbb E\big[ \Delta \cu {M_n} \big|\,\mathscr F_{n-1} \big]=0$ because $\cu M_n$ is a martingale. Then from the Bernstein's condition, we have
	
	\begin{equation}
		\mathbb{E} \big[ \mathrm{exp}\{t\cu { \Delta M_n}\}\,\big|\,\mathscr F_{n-1}\big] \preccurlyeq \cu I + \frac{\cu V_n t^2}{2(1-tc)}.
	\end{equation}
	
	Then from Lemma 4, (\ref{log}) has been proved, which means 
	
	\begin{equation}\begin{aligned}
			\mathbb{E}[S_n\,|\,\mathscr F_{n-1}] &= \mathbb{E}\bigg[\,\mathrm{tr\,exp}\,\big[t\cu {M_n}-\sum_{k=1}^{n} \cu \varLambda_{\cu {V_k}}(t)\big]\,\bigg|\,\mathscr F_{n-1}\bigg] \\
			&\le \mathrm{tr\,exp}\bigg[t\cu {M_{n-1}}-\sum_{k=1}^{n-1} \cu \varLambda_{\cu {V_k}}(t)\bigg]\\
			&= S_{n-1}.
	\end{aligned}\end{equation}
	
	Hense $\{S_n\}_{n \ge 0}$ is a supermartingale, and it is easily obtained that $S_0=\mathrm{tr\,exp}\,\cu O = \mathrm{tr} \, \cu I = d$.
\end{proof}

Next, we will give an estimation of the lower bound of $S_n$. Our next claim as follows. 

\begin{theorem}
	If $\forall x,y,t>0\,,\,0<ct<1$,
	\begin{equation}
		\lambda_{\mathrm{max}} (\cu {M_n}
		)\ge nx\,,\,\lambda_{\mathrm{max}}\bigg(\sum_{k=1}^{n} \cu \varLambda_{\cu {V_k}}(t)\bigg) \le n\mathrm{log}\bigg(1+\frac{yt^2}{2(1-tc)}\bigg)\,,
	\end{equation}
	then
	\begin{equation}
		S_n=\mathrm{tr\,exp}\bigg[t\cu {M_n}-\sum_{k=1}^{n} \cu \varLambda_{\cu {V_k}}(t)\bigg] \ge \mathrm{exp}\bigg[n(tx-\Lambda_y(t) )\bigg]\,.
	\end{equation}
\end{theorem}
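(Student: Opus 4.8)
The plan is to exploit the variational characterization of the trace of the exponential together with the monotonicity of the scalar exponential and the hypothesis that controls the largest eigenvalues of $\cu M_n$ and of the accumulated $\cu \varLambda$-sum. The key observation is that $\mathrm{tr\,exp}(\cu A) = \sum_i e^{\lambda_i(\cu A)} \ge e^{\lambda_{\mathrm{max}}(\cu A)}$ for any Hermitian $\cu A$, so it suffices to produce a lower bound on $\lambda_{\mathrm{max}}\big(t\cu M_n - \sum_{k=1}^n \cu\varLambda_{\cu V_k}(t)\big)$.

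First I would record the elementary bound $\lambda_{\mathrm{max}}(\cu A + \cu B) \ge \lambda_{\mathrm{max}}(\cu A) - \lambda_{\mathrm{max}}(-\cu B) = \lambda_{\mathrm{max}}(\cu A) + \lambda_{\mathrm{min}}(\cu B)$, or more directly the Weyl-type estimate $\lambda_{\mathrm{max}}(\cu A - \cu C) \ge \lambda_{\mathrm{max}}(\cu A) - \lambda_{\mathrm{max}}(\cu C)$ valid for Hermitian $\cu A,\cu C$ (pick a unit eigenvector $v$ for the top eigenvalue of $\cu A$; then $v^{*}(\cu A - \cu C)v = \lambda_{\mathrm{max}}(\cu A) - v^{*}\cu C v \ge \lambda_{\mathrm{max}}(\cu A) - \lambda_{\mathrm{max}}(\cu C)$, and $\lambda_{\mathrm{max}}(\cu A - \cu C)$ dominates this Rayleigh quotient). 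Applying this with $\cu A = t\cu M_n$ and $\cu C = \sum_{k=1}^n \cu\varLambda_{\cu V_k}(t)$ gives
\begin{equation*}
	\lambda_{\mathrm{max}}\bigg(t\cu M_n - \sum_{k=1}^n \cu\varLambda_{\cu V_k}(t)\bigg) \ge t\,\lambda_{\mathrm{max}}(\cu M_n) - \lambda_{\mathrm{max}}\bigg(\sum_{k=1}^n \cu\varLambda_{\cu V_k}(t)\bigg).
\end{equation*}

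Next I would substitute the two hypotheses: $\lambda_{\mathrm{max}}(\cu M_n) \ge nx$ (so $t\,\lambda_{\mathrm{max}}(\cu M_n) \ge ntx$ since $t>0$) and $\lambda_{\mathrm{max}}\big(\sum_{k=1}^n \cu\varLambda_{\cu V_k}(t)\big) \le n\log\!\big(1 + \tfrac{yt^2}{2(1-tc)}\big) = n\Lambda_y(t)$, using the paper's notation $\Lambda_y(t) = \log\!\big(1 + \tfrac{yt^2}{2(1-tc)}\big)$ for the scalar version. This yields $\lambda_{\mathrm{max}}\big(t\cu M_n - \sum_{k=1}^n \cu\varLambda_{\cu V_k}(t)\big) \ge n(tx - \Lambda_y(t))$. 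Then the monotonicity of $r \mapsto e^r$ together with $\mathrm{tr\,exp}(\cu A) \ge e^{\lambda_{\mathrm{max}}(\cu A)}$ gives $S_n \ge \exp[n(tx - \Lambda_y(t))]$, which is exactly the claim.

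**The main obstacle** is essentially bookkeeping rather than depth: one must be careful that the inequality $\lambda_{\mathrm{max}}(\cu A - \cu C) \ge \lambda_{\mathrm{max}}(\cu A) - \lambda_{\mathrm{max}}(\cu C)$ is applied to genuinely Hermitian matrices (which holds here, since $\cu M_n$ is real symmetric and each $\cu\varLambda_{\cu V_k}(t)$ is a matrix logarithm of a positive-definite matrix, hence Hermitian), and that the sign conditions $t>0$, $0<ct<1$ are in force so that the scalar manipulations and the definition of $\Lambda_y(t)$ make sense. A minor point worth stating explicitly is that $\mathrm{tr\,exp}$ of a Hermitian matrix is a sum of positive reals each of the form $e^{\lambda_i}$, so dropping all but the top term only decreases the value; no positive-semidefiniteness of the argument is needed. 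Everything else is a direct chain of substitutions.
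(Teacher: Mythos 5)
Your proposal is correct. It proves the same statement by the same underlying idea---reduce $\mathrm{tr\,exp}$ of the exponent to the exponential of its top eigenvalue and then feed in the two hypotheses---but the order of operations differs from the paper's. The paper first works at the matrix level: it replaces $\sum_{k=1}^{n}\cu\varLambda_{\cu V_k}(t)$ by the larger matrix $\lambda_{\mathrm{max}}\big(\sum_{k=1}^{n}\cu\varLambda_{\cu V_k}(t)\big)\cu I$ in the semidefinite order and invokes Lemma~1 (monotonicity of $\cu A\mapsto\mathrm{tr}\,e^{\cu A}$ under $\preccurlyeq$) to pass this through the trace exponential, after which the exponent is $t\cu M_n$ minus a scalar multiple of $\cu I$ and the spectral mapping theorem finishes. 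You instead drop to eigenvalues immediately via $\mathrm{tr\,exp}(\cu A)\ge e^{\lambda_{\mathrm{max}}(\cu A)}$ and then handle the subtraction with the Rayleigh-quotient (Weyl-type) bound $\lambda_{\mathrm{max}}(\cu A-\cu C)\ge\lambda_{\mathrm{max}}(\cu A)-\lambda_{\mathrm{max}}(\cu C)$, which you prove from scratch. Your route is self-contained and slightly more elementary, since it avoids Lemma~1 entirely; the paper's route keeps the argument uniform with the machinery it has already set up. Both correctly use that $t>0$ when multiplying the eigenvalue bound by $t$, and both arrive at the same exponent $n(tx-\Lambda_y(t))$. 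No gaps.
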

\begin{proof}
	FFrom the conditions above, we have 
	\begin{equation}
		\begin{aligned}
			S_n&=\mathrm{tr\,exp}\,\bigg[t\cu {M_n}-\sum_{k=1}^{n} \cu \varLambda_{\cu {V_k}}(t)\bigg]\\
			&\ge \mathrm{tr\,exp} \bigg[t\cu {M_n}-n\mathrm{log}(1+\frac{yt^2}{2(1-tc)}) \cu I\bigg] \\
			& \ge \lambda_{\mathrm{max}} \, \mathrm{exp} \bigg[t\cu {M_n}-n\mathrm{log}(1+\frac{yt^2}{2(1-tc)}) \cu I\bigg]\\ 
			& \ge \mathrm{exp}\bigg[tnx-n\mathrm{log}(1+\frac{yt^2}{2(1-tc)})\bigg]\\
			& = \mathrm{exp}\bigg[n(tx-\Lambda_y(t) )\bigg]\,.
		\end{aligned}
	\end{equation}
	
	The first inequality depends on
	\begin{equation}
		\sum_{k=1}^{n} \cu \varLambda_{\cu {V_k}}(t) \bx \lam\bigg(\sum_{k=1}^{n} \cu \varLambda_{\cu {V_k}}(t)\bigg) \cu I
	\end{equation}
	and Lemma 1. The second inequality depends on the fact that $e^{\cu X}$ is semidefinite, and the trace of a positive definite matrix is greater than the maximum eigenvalue. The third inequality is based on the spectral mapping theorem and some properties of the maximum eigenvalue map.
	
\end{proof}

We next turn to prove our main result by Doob's stopping theorem.
Let us denote by $A_n$ the set
\begin{equation}
	A_n=\bigg\{ \lambda_{\mathrm{max}} (\cu {M_n}
	)\ge nx\,,\,\lambda_{\mathrm{max}}\bigg(\sum_{k=1}^{n} \cu \varLambda_{\cu {V_k}}(t)\bigg) \le n\mathrm{log}\big(1+\frac{yt^2}{2(1-tc)}\big) \bigg\} \,.
\end{equation}
Then $\displaystyle A=\bigcup_{n=1}^{\infty} A_n$. In order to get our inequality, it is necessary to introduce a stopping-time
\begin{equation}
	\tau=\mathrm{inf}\bigg\{  n \ge 0\,:\, \lambda_{\mathrm{max}} (\cu {M_n}
	)\ge nx\,,\,\lambda_{\mathrm{max}}\bigg(\sum_{k=1}^{n} \cu \varLambda_{\cu {V_k}}(t)\bigg) \le n\mathrm{log}(1+\frac{yt^2}{2(1-tc)}) \bigg\}. 
\end{equation}

From the Doob's stopping theorem the $S_{\tau \wedge n}$ is also a positive supermartingale with an initial value $d$. Using the fact that $ \tau< \infty$ on event $A$, by Fatou’s lemma we have 
\begin{equation}
	d\ge \varliminf_{n \to \infty} \mathrm{E}[S_{\tau \wedge n}] \ge \varliminf_{n \to \infty} \mathrm{E}[S_{\tau \wedge n}\,\mathbb{I}_A] \ge \mathrm{E}[\varliminf_{n \to \infty}\,S_{\tau \wedge n}\,\mathbb{I}_A]=\mathrm{E}[S_{\tau}\,\mathbb{I}_A].
\end{equation}
Then
\begin{equation}
	d\ge \mathrm{E}[S_{\tau}\,\mathbb{I}_A] \ge P(A)\cdot \inf_{A} S_{\tau}.
\end{equation}
From the lemma, by choosing $t=\displaystyle \frac{x}{y+cx}$ we finally have our inequality\begin{equation}
	P(A) \le d\cdot \mathrm{exp}\bigg[n(\Lambda_y(t)-tx)\bigg] \le d\,{\bigg(1+\frac{x^2}{2(y+cx)}\bigg)}^n \mathrm{exp}\bigg(-\frac{nx^2}{y+cx}\bigg).
\end{equation}
The main part of the theorem is proved.

\section*{Acknowledgments}

Thanks to Prof. Hanchao Wang for pivotal guidance and suggestions who helped me a lot with my academic, courses, and research projects. And thanks to the School of Mathematics, Shandong University that provided great support for the undergraduate research projects.

\bibliographystyle{amsplain}

\begin{thebibliography}{10}
	\bibitem{Ci}
	Bercu, B., Delyon, B. and Rio, E. \textit{Concentration inequalities for sums and martingales. SpringerBriefs in Mathematics}, Springer, (2015).
	
	\bibitem{Ho}
	Hoeffding, W. Probability inequalities for sums of bounded random variables. \textit{J. Amer. Statist. Assoc. }, \textbf{58}, 13-30, (1963).
	
	\bibitem{Bn1}
	Bennett, G. On the probability of large deviations from the expectation for sums of bounded independent random variables. \textit{Biometrika},\textbf{ 50}, 528-535, (1963).
	
	\bibitem{Bn2}
	Bennett, G. Probability inequalities for the sum of independent random variables. \textit{J. Amer. Statist. Assoc.},\textbf{ 57}, 33-45, (1962).
	
	\bibitem{Bs1}
	Bernstein, S.N. \textit{Theory of Probability}, Moscow. (1927).
	
	
	%	\bibitem{Rio}
	%	Rio, E. In\'egalit\'es exponentielles et in\'egalit\'es de concentration. \textit{Hal}, cel-00702524 (2012).
	
	\bibitem{TUF}
	Joel A. Tropp. User-Friendly Tail Bounds for Sums of Random Matrices. \textit{Foundations of Computational Mathematics.},  \textbf{12(4)}:389-434,(2012).
	
	\bibitem{TM}
	Joel A. Tropp. An Introduction to Matrix Concentration Inequalities. arXiv:1501.01571v1 (2015).
	
	\bibitem{Fre}
	D. A. Freedman. On tail probabilities for martingales. \textit{Ann. Probab.}, \textbf{3(1)}:100–118, (1975).
	
	\bibitem{Bs2}
	Bernstein, S. Sur quelques modifications de l'in\'egalit\'e de Tchebycheff. C.R. (Doklady) Acad. Sci. URSS \textbf{17}, 279–282 (1937).
	
	\bibitem{De}
	De la Pe\~na, V. H.: A general class of exponential inequalities for martingales and ratios. Ann.  Probab. \textbf{27}, 537–564 (1999).
	
	\bibitem{Oli}
	R. I. Oliveira. Concentration of the adjacency matrix and of the Laplacian in random graphs with independent edges. Available at arXiv:0911.0600, (2010).
	
	\bibitem{Joel}
	Joel Tropp. Freedman's inequality for matrix martingales. \textit{Electron. Commun. Probab.}, \textbf{16}:262-270, (2011).
	
	\bibitem{Lieb}
	E. H. Lieb. Convex trace functions and the Wigner–Yanase–Dyson conjecture. \textit{Adv. Math.},\textbf{ 11}:267–288, (1973).
	
	\bibitem{DP}
	D. Petz, A survey of certain trace inequalities, in Functional Analysis and Operator Theory. Banach Center Publications, vol. 30 (Polish Acad. Sci., Warsaw, 1994), pp. 287–298.
	
	\bibitem{JAF}
	J. A. Tropp. From the joint convexity of quantum relative entropy to a concavity theorem of Lieb. Available at arXiv:1101.1070, (2010).
	
	\bibitem{UF}
	J. A. Tropp. User-friendly tail bounds for matrix martingales. ACM Report 2011-01, California Inst. Tech., Pasadena, CA, (2011).
	
	%	
	%	% Format for Journal Reference
	%	
	%	
	%	\bibitem{bs}
	%	
	%	\textsc{Bai, Z. and Saranadasa, H.}   Effect of high dimension: by an example of a two sample problem.    {\em 
	%Statist. Sinica.}, \textbf{6} , 311-329. (1996).
	%	
	%	
	%	\bibitem{c1}
	%	
	%	\textsc{Chatterjee, S.}   A new method of normal approximation.   {\em  Ann. Probab.}, \textbf{36} , 1584-1610. (2008).
	%	
	%	\bibitem{c}
	%	
	%	\textsc{Chatterjee, S.}  A short survey of Stein's method.   {\em  Proceedings of the International Congress of Mathematicians-Seoul 2014}, (2014).
	%	
	%	
	%	\bibitem{cs}
	%	
	%	\textsc{Chatterjee, S. and Shao, Q.}  Nonnormal approximation by Stein's method of exchangeable pairs with application to the Curie-Weiss model.  {\em  Ann. Appl. Probab.}, \textbf{21}, 464-483.  (2014).
	%	
	%	\bibitem{cl}
	%	
	%	\textsc{Chen, X. and Liu, W. } Testing independence with high-dimensional correlated samples. {\em  Ann. Statist.},  \textbf{46},   866-894.  (2018).
	%	
	%	
	%	
	%	\bibitem{cls1}
	%	\textsc{Chen, L. and Shao, Q.}
	%	A non-uniform Berry-Esseen bound via Stein's method.  {\em Probab. Theory Related Fields}, 
	%	\textbf{32} , 1985-2028. (2004).
	%	
	%	\bibitem{cls2}
	%	\textsc{Chen, L. and Shao, Q.}
	%	Normal approximation under local dependence. {\em Ann. Probab.}, 
	%	\textbf{120 },  236-254. (2001). 
	%	
	%	
	%	
	%	
	%	\bibitem{cs2}
	%	\textsc{Chen, Y. and Shao, Q.}
	%	Berry-Esseen inequality for unbounded exchangeable pairs.  {\em Probability approximations and beyond}, 
	%	\textit{Lect. Notes Stat.}, \textbf{205},  13-30,  Springer, New York .(2012).
	%	
	%	
	%	\bibitem{dj}
	%	\textsc{de Jong, P.}
	%	A central limit theorem for generalized quadratic forms.  {\em Probab. Theory Related Fields}, 
	%	\textbf{75} ,  261-277. (1987).
	%	
	%	\bibitem{e1}
	%	\textsc{Ellis, R. and Newman, C.}
	%	Limit theorems for sums of dependent random variables occurring in statistical mechanics.  {\em Z. Wahrsch. Verw. Gebiete}, \textbf{ 44} , 117-139. (1978).
	%	
	%	\bibitem{e2}
	%	\textsc{Ellis, R. and Newman, C.}
	%	The statistics of Curie-Weiss models.  {\em  J. Statist. Phys. },\textbf{19} , 149-161. (1978).
	%	
	%	
	%	\bibitem{fl}
	%	\textsc{Fan, J. and Li, R.}  Statistical challenges with high dimensionality: feature selection in knowledge discovery.   {\em International Congress of Mathematicians. Vol. III },  595-622,   Eur. Math. Soc.,  Z\"urich,  (2006).	
	%	
	%	\bibitem{gt}
	%	\textsc{G\"{o}tze, F. and Tikhomirov, A.}
	%	Asymptotic distribution of quadratic forms.   {\em Ann. Probab.}, 
	%	\textbf{27} , 1072-1098. (1999).
	%	
	%	
	%	\bibitem{j}
	%	\textsc{Jiang, T.}
	%	The asymptotic distributions of the largest entries of sample correlation matrices.   {\em Ann. Appl.  Probab.}, 
	%	\textbf{14} , 865-880. (2004).
	%	
	%
	%	\bibitem{lls}
	%	\textsc{Liu, W. Lin, Z. and Shao, Q.}
	%	The asymptotic distribution and Berry-Esseen bound of a new test for independence in high dimension with an application to stochastic optimization.    {\em Ann. Appl.  Probab.}, 
	%	\textbf{18},  2337-2366.  (2008).
	%	
	%
	%	
	%	
	%	\bibitem{ss}
	%	
	%\textsc{Shao, Q. and Su, Z.} The Berry-Esseen bound for character ratios.    {\em Proc. Amer. Math. Soc.}, \textbf{134}, 2153-2159. (2006).
	%	
	%	\bibitem{rr}
	%	\textsc{Rinott, Y. and Rotar, V.} On coupling constructions and rates in the CLT for dependent summands with applications to the antivoter model and weighted U-statistics. {\em Ann. Appl. Probab.},\textbf{7} , 1080-1105. (1997)
	%
	%	
	%	\bibitem{sz}
	%	
	%	\textsc{Shao, Q. and Zhang, Z.}  Berry-Esseen bounds of normal and nonnormal approximation for unbounded exchangeable pairs.   {\em  Ann. Probab.}, \textbf{47}, 61-108. (2019).
	%	
	%	\bibitem{szz}
	%	
	%	\textsc{Shao, Q., Zhang, M. and Zhang, Z.}  Cram\'er-type moderate deviation theorems for non-normal approximation.   {\em arXiv:1809.07966 } (2019).
	%	
	%	
	%	
	%	
	%	\bibitem{s1}
	%	
	%	\textsc{Stein, C.}  A bound for the error in the normal approximation to the distribution of a sum of dependent random variables. \ {\em  Proceedings of the Sixth Berkeley Symposium on Mathematical Statistics and Probability,Vol. II: Probability theory}, 583-602. (1972).
	
	
	
	
	
	
\end{thebibliography}

\end{document}